\newtheorem{theorem}{Theorem}[section]
\newtheorem{proposition}[theorem]{Proposition}
\newtheorem{example}{Example}[section]
\theoremstyle{remark}
\begin{document}
\title{On universal stably free modules in positive characteristic}
\author{Eric Primozic}

\maketitle
\setcounter{secnumdepth}{1}

\renewcommand{\thefootnote}{\fnsymbol{footnote}} 
\renewcommand{\thefootnote}{\arabic{footnote}} 

\begin{abstract}
We study the mod $p$ motivic cohomology of homogeneous varieties such as $GL_{n}/GL_{r}$ or $Sp_{2n}/Sp_{2n-2}$ along with the action of the Steenrod operations, without restrictions on the characteristic of the base field. In particular, we prove that certain quotient maps do not admit sections.
\end{abstract}
\section*{Introduction}
Fix a field $k$ and a $k$-algebra $A.$ An $A$-module $P$ is said to be stably free if $$P \oplus A^{n-q} \cong A^{n}$$ for some integers $0 \leq q \leq n.$ There is a universal stably free $A_{n,q}$-module $P_{n,q}$ over a $k$-algebra $A_{n,q}$ where $$\textup{Spec}(A_{n,q}) \cong GL_{n}/GL_{q}$$ \cite[\S 1]{Ray}. The module $P_{n,q}$ is free if and only if the quotient map 
\begin{equation} \label{firstquotient}GL_{n} \to GL_{n}/GL_{q}\end{equation} admits a section. For $r \leq q,$ the module $P_{n,q}$ has a free factor of rank $r$ if and only if the map \begin{equation} \label{secondquotient}GL_{n}/GL_{q-r} \to GL_{n}/GL_{q}\end{equation} admits a section. 

\indent Using Steenrod operations on \'etale cohomology with finite coefficients, Raynaud showed in \cite[Th\'eor\`eme 6.1 and Th\'eor\`eme 6.6]{Ray}:
\begin{enumerate}
\item The fibration $$GL_{n} \to GL_{n}/GL_{n-1}$$ does not admit a section except possibly when $\textup{char}(k)=2, n=3$ or $\textup{char}(k)=3, n=4.$
\item The fibrations $$Sp_{2n} \to Sp_{2n}/Sp_{2n-2}, \, \,  SO_{2n+1} \to SO_{2n+1}/SO_{2n-1}$$ do not admit sections except possibly when $\textup{char}(k)=3, n=2$ or $\textup{char}(k)=5, n=3.$
\item The fibration $$GL_{n}/GL_{n-q} \to GL_{n}/GL_{n-1}$$ does not admit a section, except possibly if $n$ is divisible by $$N_{q}(\textup{char}(k))=\prod_{\textup{prime} \, \, p\neq \textup{char}(k)}p^{1+n(p,q)}$$ where $n(p,q)$ is the largest integer $h \geq -1$ such that $p^{h}(p-1) \leq q-1.$
\end{enumerate}
 Raynaud posed the question of whether the restrictions on $\textup{char}(k)$ could be removed in these results \cite[page 21]{Ray}. The goal of this paper is to give a positive answer to Raynaud's question by studying the action of Steenrod operations on motivic cohomology with finite coefficients. 
 
 \indent After we compute the relevant motivic cohomology groups and determine the action of the Steenrod operations, our arguments are similar to what can be done in topology \cite{BorSer}. The main new tools we have to play with are the Steenrod operations from \cite{Pri1}, which act on the mod $p$ motivic cohomology of smooth schemes over $k$ with $\textup{char}(k)=p>0.$ 
 
 \indent Previously, Williams showed how to use motivic cohomology and Steenrod operations to obtain the results from \cite{Ray}, assuming that the characteristic of the base field and the relevant coefficient field are different. We also note that Mohan Kumar and Nori previously showed that the fibration \ref{firstquotient} doesn't admit a section for $n \geq 3,$ without any restrictions on $\textup{char}(k)$ (they proved a more general statement on when projective modules defined by certain unimodular rows are free) \cite[Theorem 17.1]{Swa}. Our other results seem to be new.

\section*{Acknowledgments} I thank Ben Williams for suggesting to me the problem considered in this paper.
\section{Setup} Fix a field $k$ and let $\textup{Sm}(k)$ denote the category of separated smooth schemes of finite type over $k.$ Let $H(k), H_{\bullet}(k)$ denote the unpointed and pointed motivic homotopy categories respectively of spaces over $k$ \cite{MorVoe}. Let $SH(k)$ denote the stable motivic homotopy category of spectra over $k$ \cite{Voe3}. There is an adjunction $$\Sigma^{\infty}_{+}: H(k) \rightleftarrows SH(k): \Omega^{\infty}$$ where $\Sigma^{\infty}_{+}$ denotes the infinite $\mathbb{P}^{1}$-suspension functor and $\Omega^{\infty}$ is the infinite delooping functor.
\indent For $X \in \textup{Sm}(k)$ and a coefficient ring $A,$ we let $H^{i}(X, A(j))$ denote the motivic cohomology group of $X$ of degree $i$ and weight $j.$ There is an isomorphism between motivic cohomology and higher Chow groups: $$H^{i}(X, \mathbb{Z}(j)) \cong CH^{j}(X, 2j-i).$$ In particular, $H^{i}(X, \mathbb{Z}(j))=0$ for $i>2j.$ For a coefficient ring $A,$ we let $HA \in SH(k)$ denote the motivic Eilenberg-MacLane spectrum. There are Eilenberg-MacLane spaces $K(i,j ,A) =\Omega^{\infty}\Sigma^{i,j}HA \in H_{\bullet}(k)$ for all $i, j \geq 0.$

\section{Higher Chern classes and Steenrod operations}
\indent For a flat affine group scheme $G/k,$ we consider the geometric classifying space $BG \in H_{\bullet}(k).$ Here, $BG$ is defined in the sense of Totaro \cite{TotChow} and Morel-Voevodsky \cite{MorVoe} by approximating $BG$ by certain smooth quotients $U/G.$ For $X \in \textup{Sm}(k)$ such that $G$ acts on $X,$ $H_{G}^{i}(X, \mathbb{Z}(j))$ is similarly defined to be the motivic cohomology $H^{i}((X\times U)/G, \mathbb{Z}(j))$ for a suitable $U.$

\indent Let $S \in H_{\bullet}(k)$ denote the simplicial sphere and let $T \in H_{\bullet}(k)$ denote the Tate sphere. For the infinite general linear group $GL$ and $X \in \textup{Sm}(k),$ there is an isomorphism $$\textup{Hom}_{H_{\bullet}(k)}(\Sigma^{m}_{T}\Sigma^{n}_{S}X_{+}, BGL \times \mathbb{Z}) \cong K_{n-m}(X)$$ for $n, m \geq 0$ \cite[Theorem 3.13]{MorVoe}. From \cite{Pus}, the motivic cohomology of $BGL$ is given by $$H^{*}(BGL, \mathbb{Z}(*)) \cong H^{*}(k, \mathbb{Z}(*))[c_{1}, c_{2}, \ldots ]$$ where $c_{i}$ has bidegree $(2i,i)$ for all $i \in \mathbb{N}.$

\indent These facts give rise to higher Chern classes. Let $X \in \textup{Sm}(k).$ For $m \geq 0$ and $i \in \mathbb{N},$ we get a map 
\[ \small
\begin{tikzcd}
c_{m,i}:K_{m}(X)=\textup{Hom}_{H_{\bullet}(k)}(\Sigma_{S}^{m}X, \mathbb{Z} \times BGL) \arrow[r, "(c_{i})_{*}"] & \textup{Hom}_{H_{\bullet}(k)}(\Sigma_{S}^{m}X, K(2i,i, \mathbb{Z}))=H^{2i-m}(X, \mathbb{Z}(i)).
\end{tikzcd}
\]
\begin{proposition} Let $\alpha \in K_{1}(GL_{n})$ denote the class of the universal matrix. Then $$H^{*}(GL_{n}, \mathbb{Z}(*)) \cong H^{*}(k, \mathbb{Z}(*)) \otimes \bigwedge(c_{1,1}(\alpha), \ldots, c_{1,n}(\alpha))$$ with the relations $c_{1,i}(\alpha)^{2}=\rho c_{1,2i-1}(\alpha)$ holding for all $i$ where $\rho=-1 \in H^{1}(k, \mathbb{Z}(1)).$
\end{proposition}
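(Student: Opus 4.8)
The plan is to induct on $n$, using the ``first column'' fibration, computing the $H^*(k,\mathbb{Z}(*))$-module structure first and extracting the multiplicative relations at the end from a short bidegree analysis. Throughout I use that the classes $c_{1,i}$ are additive: the failure of $(c_i)_*$ to be additive on $\operatorname{Hom}_{H_{\bullet}(k)}(\Sigma_{S}X, \mathbb{Z}\times BGL)$ is, by the Whitney formula, a sum of products of strictly lower Chern classes pulled back along the $H$-space sum, and every cup product of positive-degree classes vanishes on the simplicial suspension $\Sigma_{S}X$; hence $c_{1,i}\colon K_1(X)\to H^{2i-1}(X,\mathbb{Z}(i))$ is a homomorphism. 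Since the group multiplication $m$ on $GL_n$ satisfies $m^*\alpha = \operatorname{pr}_1^*\alpha + \operatorname{pr}_2^*\alpha$ in $K_1(GL_n\times GL_n)$, each $c_{1,i}(\alpha)$ is primitive. The base case is $GL_1=\mathbb{G}_m$, where $H^*(\mathbb{G}_m,\mathbb{Z}(*)) = H^*(k,\mathbb{Z}(*))\oplus H^{*-1}(k,\mathbb{Z}(*-1))\cdot[t]$ with $[t]=c_{1,1}(\alpha)$ the class of the coordinate unit, and the relation $c_{1,1}(\alpha)^2=\rho\,c_{1,1}(\alpha)$ is precisely the motivic Steinberg relation $\{t,t\}=\{-1,t\}$.

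For the inductive step I would use the map $p\colon GL_n\to\mathbb{A}^n\setminus 0$ sending a matrix to its first column. This realizes $GL_n$ as a torsor over $\mathbb{A}^n\setminus 0$ under the stabilizer of $e_1$, an extension of $GL_{n-1}$ by a unipotent group; the stabilizer is therefore special and $\simeq_{\mathbb{A}^1}GL_{n-1}$, so $p$ is a Zariski-locally trivial fibration with fiber $\simeq_{\mathbb{A}^1}GL_{n-1}$. The localization sequence for $0\in\mathbb{A}^n$ gives $H^*(\mathbb{A}^n\setminus 0,\mathbb{Z}(*)) = H^*(k,\mathbb{Z}(*))\cdot 1\oplus H^{*-(2n-1)}(k,\mathbb{Z}(*-n))\cdot\tau_n$, free of rank two over $H^*(k,\mathbb{Z}(*))$ with $\tau_n$ in bidegree $(2n-1,n)$. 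Restricting $\alpha$ along the fiber inclusion gives the universal matrix of $GL_{n-1}$, so $c_{1,j}(\alpha)$ restricts to the corresponding generator for $j\le n-1$ and $c_{1,n}(\alpha)$ restricts to $0$. By the inductive hypothesis the restriction $H^*(GL_n,\mathbb{Z}(*))\to H^*(GL_{n-1},\mathbb{Z}(*))$ hits a module basis and is therefore surjective, so in the Gysin/Wang sequence of the fibration over the motivic sphere $\mathbb{A}^n\setminus 0$ the differential vanishes and the sequence splits, yielding $H^*(GL_n,\mathbb{Z}(*))\cong H^*(GL_{n-1},\mathbb{Z}(*))\otimes_{H^*(k,\mathbb{Z}(*))}\big(H^*(k,\mathbb{Z}(*))\cdot 1\oplus H^*(k,\mathbb{Z}(*))\cdot c_{1,n}(\alpha)\big)$ as modules. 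Being the transgression of the polynomial generator $c_n\in H^{2n}(BGL_n,\mathbb{Z}(n))$, the class $c_{1,n}(\alpha)$ equals $p^*\tau_n$ up to a unit and generates the new summand; this shows $H^*(GL_n,\mathbb{Z}(*))$ is free over $H^*(k,\mathbb{Z}(*))$ on the square-free monomials in $c_{1,1}(\alpha),\dots,c_{1,n}(\alpha)$, the underlying module of the claimed algebra.

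With the module structure in hand, the relation $c_{1,i}(\alpha)^2=\rho\,c_{1,2i-1}(\alpha)$ (with the convention $c_{1,m}(\alpha)=0$ for $m>n$) reduces to the determination of a single coefficient. Both sides lie in $H^{4i-2}(GL_n,\mathbb{Z}(2i))$ and are $2$-torsion: the left side because $c_{1,i}(\alpha)$ has odd cohomological degree, the right side because $2\rho=0$. Expanding $c_{1,i}(\alpha)^2$ in the free basis $\{\prod_{j\in S}c_{1,j}(\alpha)\}$, the coefficient of $\prod_{j\in S}c_{1,j}(\alpha)$ lies in $H^{4i-2-(2\Sigma-|S|)}(k,\mathbb{Z}(2i-\Sigma))$ with $\Sigma=\sum_{j\in S}j$, and must itself be $2$-torsion. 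Using that $H^{a}(k,\mathbb{Z}(b))$ vanishes unless $0\le a\le b$, a direct check of these constraints shows that the only subset $S$ producing a group with nonzero $2$-torsion is $S=\{2i-1\}$, for which the coefficient lies in $H^1(k,\mathbb{Z}(1))=k^{\times}$ with $2$-torsion subgroup $\mu_2(k)=\langle\rho\rangle$; when $2i-1>n$ no such $S$ is available, forcing the square to vanish (consistent with $\rho=0$ in characteristic $2$). Hence $c_{1,i}(\alpha)^2=\varepsilon\,\rho\,c_{1,2i-1}(\alpha)$ for a universal constant $\varepsilon\in\{0,1\}$, independent of $k$ and $n$.

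\emph{The main obstacle is to show $\varepsilon=1$}, i.e.\ that the square is genuinely nonzero. I would pin this down by reducing modulo $2$ and applying the motivic Steenrod operations of \cite{Pri1}: transporting the Wu-type formula for the action of the squares on the Chern classes $c_i\in H^*(BGL,\mathbb{Z}/2(*))$ through the transgression $c_i\mapsto c_{1,i}(\alpha)$ computes $c_{1,i}(\alpha)^2$ modulo $2$ and forces $\varepsilon=1$; equivalently, comparison with the \'etale (or real) realization, where the corresponding relation is the classical one used by Raynaud \cite{Ray}, determines the universal constant. The $i=1$ instance is the Steinberg relation already verified on $\mathbb{G}_m$, which fixes the normalization and serves as a consistency check. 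The delicate points are the bookkeeping of the Wu formula under transgression and the unambiguous lift of the mod $2$ computation to the integral $2$-torsion class; the latter is legitimate here because over a field in which $-1$ is not a square $\mu_2(k)$ injects into $H^1(k,\mathbb{Z}/2(1))$, and universality of $\varepsilon$ lets one compute over such a field.
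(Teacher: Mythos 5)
The paper does not actually prove this proposition: its ``proof'' is a citation to Pushin's Statement~1, and your induction over the first-column fibration $GL_n\to\mathbb{A}^n\setminus 0$ is essentially the standard argument behind that citation. The module-theoretic half of your sketch is sound in outline, though you should note that there is no motivic Serre spectral sequence, so the ``Gysin/Wang sequence'' has to be built by hand -- e.g.\ by Mayer--Vietoris over the cover of $\mathbb{A}^n\setminus 0$ by the opens $\{x_i\neq 0\}$, over each of which the torsor is visibly trivial (complete the column $(x_1,\dots,x_n)^{T}$ with $x_i\neq 0$ to an invertible matrix using the standard basis vectors). Your bidegree analysis also has a small leak: for $i=1$ the subset $S=\emptyset$ contributes a coefficient in $H^{2}(k,\mathbb{Z}(2))\cong K_2^M(k)$, which certainly can have nonzero $2$-torsion, so it is not true that $S=\{2i-1\}$ is the only surviving term on degree grounds alone. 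This is fixable: pulling back along the identity section $e\colon\operatorname{Spec}k\to GL_n$ kills every basis monomial with $S\neq\emptyset$ and sends $c_{1,i}(\alpha)$ to $c_{1,i}(e^*\alpha)=0$, so the constant term of the square vanishes.

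The genuine gap is the step you yourself flag: showing $\varepsilon=1$. This is the entire content of the relation beyond the additive structure, and neither of your proposed routes is carried out -- and the first one cannot work as stated. The operations of \cite{Pri1} are mod-$p$ operations for $p=\operatorname{char}k$; the constant $\rho=[-1]$ is a $2$-primary phenomenon, and in characteristic $2$ it is zero, so those operations see nothing. In characteristic $\neq 2$ you would need Voevodsky's mod-$2$ operations, but there the ``top square equals cup square'' identity fails off the Chow diagonal for weight reasons: $c_{1,i}(\alpha)^2$ lies in bidegree $(4i-2,2i)$ while $Sq^{2i-1}$ has bidegree $(2i-1,i-1)$, so no single Steenrod square computes the cup square of a class in $H^{2i-1,i}$ -- this weight mismatch is exactly why $\rho$ (and $\tau$) enter, and it is what your ``Wu formula through transgression'' would have to confront. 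Your realization argument can be made to work in characteristic $0$ (over $\mathbb{R}$, real realization sends $c_{1,i}(\alpha)$ to the degree-$(i-1)$ generator of the simple system for $H^{*}(O(n);\mathbb{Z}/2)$ and $\rho$ to $1$, and the classical relation $\beta_{i-1}^2=\beta_{2i-2}$ forces $\varepsilon=1$), but the asserted ``universality of $\varepsilon$'' does not transport this across characteristics: there are no field homomorphisms between $\mathbb{R}$ and fields of characteristic $p$, so a separate argument (specialization over a DVR, or a direct computation over $\mathbb{F}_p$ with $p\equiv 3\bmod 4$ where $\rho\neq 0$) is required precisely in the positive-characteristic setting this paper cares about.
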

\begin{proof} This is proved by Pushin \cite[Statement 1]{Pus}.
\end{proof}

\indent Now assume that $\textup{char}(k)=p>0.$ For $n \geq 0,$ there are Steenrod operations $$P^{n}:H\mathbb{F}_{p} \to \Sigma^{2n(p-1), n(p-1)}H\mathbb{F}_{p}$$ \cite{Pri1}. Restricted to mod $p$ Chow groups, the Steenrod operations $P^{n}$ satisfy the following  properties.
\begin{theorem} \label{theorem steenrod}
Let $X \in \textup{Sm}(k).$
\begin{enumerate}
\item $P^{0}$ is the identity.
\item For $n \in \mathbb{N},$ $P^{n}$ is the $p$th power on $CH^{n}(X)/p.$
\item (instability) For $m<n,$ $P^{n}$ is $0$ on  $CH^{m}(X)/p.$
\item (Adem relations) Let $a, b \in \mathbb{N}.$ Restricted to mod $p$ Chow groups, $$P^{a}P^{b}=\sum_{j=0}^{\lfloor \frac{a}{p}\rfloor} (-1)^{a+j}\binom{(p-1)(b-j)-1}{a-pj} P^{a+b-j}P^{j}.$$
\item (Cartan formula) Let $x, y \in CH^{*}(X)/p$ and let $n \in \mathbb{N}.$ Then $$P^{n}(xy)=\sum_{j=0}^{n}P^{j}(x)P^{n-j}(y).$$
\end{enumerate}
\end{theorem}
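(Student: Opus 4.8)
The five assertions are the standard axioms satisfied by reduced power operations, and since the construction of the $P^{n}$ is carried out in \cite{Pri1}, my plan is to extract (1), (2), (3) and (5) directly from the total power operation underlying that construction, and then to isolate the Adem relations (4) as the genuinely new computation. Recall that the $P^{n}$ assemble into a single total operation; on a class $x \in CH^{m}(X)/p$, regarded in $H^{2m}(X,\mathbb{F}_{p}(m))$, it is produced from the $p$-fold power of $x$ together with its $\Sigma_{p}$-equivariant structure, restricted along the diagonal, as in \cite{Pri1}. The first thing I would record are the bidegrees: $P^{n}$ sends $CH^{m}(X)/p$ to $CH^{m+n(p-1)}(X)/p$, so that the total operation applied to a weight-$m$ class is supported in components with $0 \le n \le m$.

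From this formalism, properties (1), (2), (3) and (5) are formal. The bottom component of the total power is the identity, giving $P^{0}=\mathrm{id}$; the top component, namely the one in degree $n=m$ on a class of $CH^{m}(X)/p$, is exactly the $p$-th power map, which yields (2) and, together with the degree bookkeeping above, the instability statement (3), since there are no components with $n>m$. The Cartan formula (5) is the multiplicativity of the total power operation: the $p$-fold power of a product is the product of the $p$-fold powers, and restriction along the diagonal is a ring homomorphism, so the total operation is multiplicative; comparing the degree-$n$ components of $P_{\mathrm{tot}}(xy)=P_{\mathrm{tot}}(x)\,P_{\mathrm{tot}}(y)$ gives the stated sum.

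The Adem relations (4) are the main obstacle. I see two routes. The first mirrors the classical argument of Steenrod--Epstein and its motivic adaptation by Voevodsky: one analyzes the iterated power operation through the inclusion $\Sigma_{p}\wr\Sigma_{p}\hookrightarrow\Sigma_{p^{2}}$, computes the relevant mod $p$ motivic cohomology of $B(\Sigma_{p}\wr\Sigma_{p})$ in characteristic $p$, and matches the two resulting expressions for the restricted total $p^{2}$-power operation; the binomial coefficients $\binom{(p-1)(b-j)-1}{a-pj}$ then emerge from this comparison precisely as in topology. The difficulty is that this requires the characteristic-$p$ computation of these classifying-space cohomologies, which is exactly the new input supplied by \cite{Pri1}. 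The second route, which I would favour in order to keep the present argument close to the computations actually needed later, is to verify the identity of natural operations on a universal example: by naturality together with the projective bundle formula and the splitting principle, it suffices to evaluate both sides on monomials in the polynomial algebra $CH^{*}\bigl((B\mathbb{G}_{m})^{\times N}\bigr)/p$ for $N$ large, where these operations are detected and where the action on the degree-one generators is forced by (1)--(3) to be $P^{0}c=c$, $P^{1}c=c^{p}$, and $P^{n}c=0$ for $n\ge 2$. The Cartan formula then determines both sides on every monomial, and checking their equality reduces to the same binomial-coefficient identity that underlies the classical Adem relations. In either approach the crux is the passage to characteristic $p$: granting the cohomological computations of \cite{Pri1}, the remaining combinatorics is identical to the known case.
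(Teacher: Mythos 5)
The paper offers no proof of this theorem at all: it is quoted from \cite{Pri1}, where the operations are constructed and these properties established, so the relevant comparison is between your sketch and the argument of \cite{Pri1} --- and there the two diverge in a way that matters. Your entire framework presupposes that the $P^{n}$ in characteristic $p$ arise from a $\Sigma_{p}$-equivariant total $p$-th power construction (Steenrod--Epstein, Voevodsky) ``as in \cite{Pri1}''; but that construction is precisely what breaks down when $\textup{char}(k)=p$, because the mod $p$ motivic cohomology of $B\mu_{p}$ and $B\Sigma_{p}$ over such a field does not have the form needed to extract the individual $P^{n}$ from a total power. The operations of \cite{Pri1} are instead obtained by comparison with characteristic zero over a mixed-characteristic discrete valuation ring with residue field related to $k$: identities among the operations are inherited from their characteristic-zero counterparts by naturality of the specialization maps, and this is exactly why the theorem is asserted only after restriction to mod $p$ Chow groups rather than on all of $H^{*,*}(X,\mathbb{F}_{p})$ --- a restriction your sketch neither produces nor explains, since a genuine multiplicative total power operation would yield the Cartan formula and Adem relations on all of motivic cohomology. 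Consequently your route 1 for the Adem relations is circular as written: you defer the characteristic-$p$ computation of $H^{*,*}(B(\Sigma_{p}\wr\Sigma_{p}),\mathbb{F}_{p})$ to \cite{Pri1}, which does not contain it and deliberately avoids needing it; and the formal derivations of (1), (2), (3), (5) rest on a construction that is not available in this setting.

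Your route 2 for the Adem relations has an independent gap. Verifying an identity of operations on monomials in $CH^{*}((B\mathbb{G}_{m})^{\times N})/p$ does not suffice: the splitting principle reduces statements about Chern classes of vector bundles to $(B\mathbb{G}_{m})^{\times N}$, but an arbitrary class in $CH^{m}(X)/p$ is not a polynomial in Chern classes, and there is no detection theorem asserting that natural operations on mod $p$ Chow groups are faithfully represented on products of $B\mathbb{G}_{m}$. The universal example for a codimension-$m$ mod $p$ cycle class is the motivic Eilenberg--MacLane space $K(2m,m,\mathbb{F}_{p})$, whose operations are not controlled by $(B\mathbb{G}_{m})^{\times N}$; in topology the analogous detection statement is itself a nontrivial theorem requiring the cohomology of Eilenberg--MacLane spaces, and no such result is available here. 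The correct course is the one the paper implicitly takes: cite \cite{Pri1}, and if a proof is wanted, reproduce the specialization-from-characteristic-zero argument rather than the equivariant power construction.
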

\indent We can now determine the action of the Steenrod operations on the generators of the mod $p$ motivic cohomology of $GL_{n}.$
\begin{proposition} \label{actionsteenrod}
Let $j \leq n$ and let $i \in \mathbb{N}.$ Then $P^{i}(c_{1,j}(\alpha))=\binom{j-1}{i}c_{1,j+i(p-1)}(\alpha).$
\end{proposition}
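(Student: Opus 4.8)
The plan is to realize each generator $c_{1,j}(\alpha)$ as the pullback of a universal Chern class and then transport the computation of the Steenrod operations to $BGL$, where the splitting principle is available. By definition $c_{1,j}(\alpha)=(c_j)_\ast\alpha$ is the composite of $\alpha\colon \Sigma_S (GL_n)_+ \to \mathbb{Z}\times BGL$ with $c_j\colon \mathbb{Z}\times BGL \to K(2j,j,\mathbb{Z})$; that is, it is the pullback $\alpha^\ast c_j$ of the universal class $c_j\in H^{2j}(BGL,\mathbb{F}_p(j))$ under the suspension isomorphism $\widetilde H^{2j}(\Sigma_S (GL_n)_+,\mathbb{F}_p(j))\cong H^{2j-1}(GL_n,\mathbb{F}_p(j))$. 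Since the $P^i$ are stable operations, they commute with $\alpha^\ast$ and with this suspension isomorphism, so that $P^i(c_{1,j}(\alpha))=\alpha^\ast P^i(c_j)$ after reduction mod $p$. First I would record that $\alpha^\ast$ annihilates every product of positive-degree classes, because its target is the reduced cohomology of a simplicial suspension and hence carries trivial cup products, while $\alpha^\ast c_r=c_{1,r}(\alpha)$ for every $r$. Consequently only the indecomposable part of $P^i(c_j)$ survives: writing $P^i(c_j)\equiv \lambda_{j,i}\,c_{j+i(p-1)}$ modulo decomposables in $H^\ast(BGL,\mathbb{F}_p)$, one obtains $P^i(c_{1,j}(\alpha))=\lambda_{j,i}\,c_{1,j+i(p-1)}(\alpha)$, and it remains only to identify the scalar $\lambda_{j,i}$.

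To compute $\lambda_{j,i}$ I would invoke the splitting principle. By naturality of the $P^i$, restriction along the maximal torus embeds $H^\ast(BGL,\mathbb{F}_p)$ into the ring of symmetric functions in Chern roots $x_1,x_2,\dots$, each of bidegree $(2,1)$, sending $c_r$ to the elementary symmetric function $e_r$. On a single root, parts (1)--(3) of Theorem \ref{theorem steenrod} give $P^0(x_a)=x_a$, $P^1(x_a)=x_a^p$ and $P^{\geq 2}(x_a)=0$, so the total power operation acts by $x_a\mapsto x_a+x_a^p$. The Cartan formula (part (5)) then gives, for the total operation, $\prod_a(1+x_a+x_a^p)$, whose component of degree $j+i(p-1)$ involving $i$ Frobenius-twisted roots is exactly the monomial symmetric function $m_{(p^i1^{j-i})}$; thus $P^i(c_j)=m_{(p^i1^{j-i})}$ under this identification.

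Finally I would extract the coefficient of the indecomposable generator $e_{j+i(p-1)}$ from $m_{(p^i1^{j-i})}$. Passing to the quotient by decomposables and using the exponential together with Newton's identities, this coefficient can be read off from the generating function $\log(1+x+s\,x^p)$, where the variable $s$ marks the number of twisted roots; a short computation gives $\lambda_{j,i}=(-1)^{i(p-1)}\bigl(\binom{j-1}{i}+p\binom{j-1}{i-1}\bigr)$, which reduces mod $p$ to $\binom{j-1}{i}$ in every characteristic. I expect this last combinatorial identity to be the main obstacle: the preceding steps are formal, but isolating the single-$e$ coefficient of $m_{(p^i1^{j-i})}$ and verifying its $p$-integral reduction requires care, in particular reconciling the sign $(-1)^{i(p-1)}$ when $p=2$ and checking that the division by $i$ implicit in Newton's identity introduces no spurious $p$-torsion. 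A secondary point to confirm is that the suspension argument is valid motivically, namely that $\alpha^\ast$ genuinely kills decomposables and that restriction to the torus commutes with $P^i$; both follow from naturality and the suspension isomorphism but should be stated explicitly.
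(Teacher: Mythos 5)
Your argument is correct, and its skeleton is the same as the paper's: both proofs compute $P^{i}(c_{j})$ modulo decomposables in $H^{*}(BGL,\mathbb{F}_{p})$ and then pull back along $\alpha$, using that products of positive-degree reduced classes die on a simplicial suspension (the paper cites \cite[Lemma 6]{Pus} for precisely this vanishing, which is your ``suspensions have trivial cup products'' step). Where you genuinely diverge is in how the indecomposable coefficient is obtained. The paper does not recompute anything: it observes that Theorem \ref{theorem steenrod} endows the characteristic-$p$ operations with the same axioms ($P^{0}=\mathrm{id}$, $p$-th power in top degree, instability, Adem, Cartan) as their classical counterparts, so Pushin's characteristic-$0$ identity $P^{i}(c_{j})\equiv\binom{j-1}{i}c_{j+i(p-1)}$ modulo decomposables \cite[Lemmas 12 and 16]{Pus} transfers verbatim. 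You instead re-derive that identity from scratch via the splitting principle and a generating-function extraction, and your computation checks out: the coefficient of $e_{j+i(p-1)}$ in $m_{(p^{i}1^{j-i})}$ is indeed $(-1)^{i(p-1)}\bigl(\binom{j-1}{i}+p\binom{j-1}{i-1}\bigr)$ (e.g.\ $m_{(2,1)}=e_{1}e_{2}-3e_{3}$ and $m_{(3,1)}\equiv 4e_{4}$ confirm the formula for $p=2,3$, $i=1$, $j=2$). The two worries you flag are not actual obstacles: the division by $i$ in Newton's identity is harmless because the indecomposable quotient of the integral ring of symmetric functions in each degree $N$ is torsion-free (it is $\mathbb{Z}\cdot e_{N}$), so the rational computation already determines the integer coefficient; and the sign causes no trouble since $(-1)^{i(p-1)}\equiv 1 \pmod p$ for every prime ($p-1$ is even when $p$ is odd, and $-1\equiv 1$ when $p=2$). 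As for what each route buys: the paper's proof is shorter and makes the transfer principle explicit --- once the axioms of Theorem \ref{theorem steenrod} are in place, every classical formula deduced from them holds in characteristic $p$ --- while your proof is self-contained, independent of the characteristic-$0$ literature, and has the merit of exhibiting the binomial coefficient by direct calculation rather than by citation.
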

\begin{proof}
Essentially, the argument given in \cite{Pus} works here. The argument given in \cite[proof of Theorem 20]{Wil} also works. From Theorem \ref{theorem steenrod}, the Steenrod operations $P^{n}$ have the same action on $H^{*}(BGL, \mathbb{F}_{p}) \cong \mathbb{F}_{p}[c_{1}, c_{2}, \ldots ]$ as their characteristic $0$ counterparts. In particular, $P^{i}(c_{j})=\binom{j-1}{i}c_{j+i(p-1)}$ modulo decomposable elements \cite[Lemma 12 and Lemma 16]{Pus}. The result then follows from the fact that the map
\[
\begin{tikzcd}
\Sigma^{l}_{S}X_{+} \arrow[r, ""] & BGL \arrow[r, "(c_{r}c_{s})_{*}"] &K(2(r+s), r+s, \mathbb{Z})
\end{tikzcd}
\]
is null-homotopic for any $X \in \textup{Sm}(k),$ a map $\Sigma^{l}_{S}X_{+} \to BGL,$  and $l, r, s \in \mathbb{N}$ \cite[Lemma 6]{Pus}.
\end{proof}

\section{Motivic cohomology of homogeneous spaces}
\indent Fix a field $k$ with $\textup{char}(k)=p>0.$ We consider group schemes $GL_{n}, Sp_{2n}, SO_{2n+1}$ defined over $k.$ In this section, we compute the mod $p$ motivic cohomology of homogeneous varieties such as $GL_{n}/GL_{r}$ along with the action of the Steenrod operations. Our main computational tool is the Eilenberg-Moore spectral sequence of Krisha \cite{Kri}. We follow the notation of \cite[Chapter 16]{Tot2}. 

\indent For a bigraded commutative ring $R$ and bigraded $R$-modules $M, N,$ let $\textup{Tor}^{R}_{i,q,j}(M,N)$ denote the $(q,j)$th part of $\textup{Tor}^{R}_{i}(M,N).$ For the definition of torsion and non-torsion primes, see \cite[Section 3]{Pri}. For our purposes, we only need to know that $p$ is a non-torsion prime for $GL_{n}, Sp_{2n}$ for all primes $p$ and $2$ is the only torsion prime for $SO_{2n+1}.$
\begin{theorem} Let $G/k$ be a split reductive group and assume that $p$ is a non-torsion prime for $G.$ Let $X \in \textup{Sm}(k)$ be quasi-projective with a $G$ action. For $j \geq 0,$ there exists a convergent second-quadrant spectral sequence with  
$$E_{2}^{l,q}=\textup{Tor}_{-l,q,j}^{CH^{*}(BG)/p}(\mathbb{F}_{p}, H^{*}_{G}(X, \mathbb{F}_{p}(*)))\Rightarrow H^{l+q}(X, \mathbb{F}_{p}(j)).$$
\end{theorem}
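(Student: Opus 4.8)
The plan is to realize the asserted spectral sequence as the Eilenberg--Moore spectral sequence of the Borel fibration $X \to X_{hG} \to BG$, built from Totaro's finite-dimensional approximations and resolved by a Koszul complex over $CH^{*}(BG)/p$, following Krishna \cite{Kri} and the account in \cite[Chapter 16]{Tot2}. The point of the argument is that the two standing hypotheses are precisely what make this work: quasi-projectivity of $X$ produces the geometric approximations as honest smooth schemes, and the non-torsion assumption forces the relevant base ring to be the polynomial ring $CH^{*}(BG)/p$.

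First I would set up the approximations. Choose $G$-representations $V_{m}$ with open subschemes $U_{m}\subseteq V_{m}$ on which $G$ acts freely and with $\textup{codim}(V_{m}\setminus U_{m})\to\infty$, so that $B_{m}G := U_{m}/G$ computes $H^{*}(BG,\mathbb{F}_{p}(*))$ and $CH^{*}(BG)/p$ through a range of degrees growing with $m$. Because $X$ is smooth and quasi-projective and $G$ acts freely on $U_{m}$, the geometric quotient $X_{G}^{(m)} := (X\times U_{m})/G$ exists as a smooth quasi-projective scheme, and $X_{G}^{(m)}\to B_{m}G$ is a fibre bundle with fibre $X$; this is the algebraic model of the Borel fibration, with the fibre inclusion $X\hookrightarrow X_{G}^{(m)}$ coming from a $k$-point of $U_{m}$. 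By the definition of equivariant motivic cohomology, $H^{i}_{G}(X,\mathbb{F}_{p}(j))\cong H^{i}(X_{G}^{(m)},\mathbb{F}_{p}(j))$ in each fixed bidegree for $m$ large.

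Next I would produce the spectral sequence at each finite level. Pullback along $X_{G}^{(m)}\to B_{m}G$ makes $H^{*}(X_{G}^{(m)},\mathbb{F}_{p}(*))$ a module over $CH^{*}(B_{m}G)/p$; since $p$ is non-torsion for the split reductive group $G$, this ring is a polynomial algebra on the characteristic classes (Chern classes for $GL_{n}$, symplectic Pontryagin classes for $Sp_{2n}$, and so on), cf. \cite[Section 3]{Pri}. Resolving $\mathbb{F}_{p}$ by the Koszul complex on these generators and tensoring with $H^{*}(X_{G}^{(m)},\mathbb{F}_{p}(*))$ yields a filtered complex whose spectral sequence has $E_{2}$-page $\textup{Tor}^{CH^{*}(BG)/p}(\mathbb{F}_{p}, H^{*}_{G}(X,\mathbb{F}_{p}(*)))$ and abuts to $H^{*}(X,\mathbb{F}_{p}(*))$; this is Krishna's spectral sequence. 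The geometric inputs to the construction are homotopy invariance, the projective bundle formula, and localization for the higher Chow groups of the smooth quasi-projective schemes $U_{m}/G$ and $X_{G}^{(m)}$, all of which hold for motivic cohomology of smooth schemes over an arbitrary field, so nothing in the construction is special to characteristic zero.

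Finally I would settle convergence and the bigrading, which is where fixing the weight $j$ is essential. The spectral sequence sits in the second quadrant ($l\le 0$, $q\ge 0$) with $d_{r}\colon E_{r}^{l,q}\to E_{r}^{l+r,q-r+1}$. Each polynomial generator of $CH^{*}(BG)/p$ has motivic weight $\ge 1$, so the homological-degree-$(-l)$ part of the Koszul resolution contributes weight at least $-l$; hence in fixed weight $j$ only the columns with $-l\le j$ can be nonzero. Thus for each $j$ the spectral sequence is concentrated in finitely many columns, is bounded in every total degree, and converges strongly to $H^{l+q}(X,\mathbb{F}_{p}(j))$; the same weight bound makes the passage to the limit over $m$ harmless, since in any fixed degree the approximations are eventually isomorphisms. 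The main obstacle is not the formal homological algebra but the two identifications that the hypotheses are there to supply: that non-torsion really forces the base ring to be $CH^{*}(BG)/p$ rather than the full bigraded $H^{*}(BG,\mathbb{F}_{p}(*))$, so that the correct $\textup{Tor}$ is computed, and that Krishna's cycle-theoretic construction transports verbatim to $\textup{char}(k)=p$.
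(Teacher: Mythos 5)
The paper does not actually prove this theorem: it is imported verbatim from Krishna \cite{Kri} (with the bigrading conventions of \cite[Chapter 16]{Tot2}), so there is no internal proof to compare yours against. Judged on its own terms, your outline has the right skeleton --- Borel-construction approximations $(X\times U_{m})/G$, the Koszul resolution of $\mathbb{F}_{p}$ over the polynomial ring $CH^{*}(BG)/p$, and the observation that in a fixed weight $j$ only the columns with $-l\le j$ survive, which is indeed the correct reason the spectral sequence is bounded and convergent. You also correctly identify what each hypothesis is for: quasi-projectivity to get the mixed quotients as smooth schemes, and the non-torsion assumption to ensure the base ring of the $\textup{Tor}$ is $CH^{*}(BG)/p$ rather than some larger bigraded ring.

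The genuine gap is the central sentence: ``resolving $\mathbb{F}_{p}$ by the Koszul complex and tensoring with $H^{*}(X_{G}^{(m)},\mathbb{F}_{p}(*))$ yields a filtered complex whose spectral sequence \ldots abuts to $H^{*}(X,\mathbb{F}_{p}(*))$.'' Tensoring a resolution of $\mathbb{F}_{p}$ with the module $H^{*}_{G}(X,\mathbb{F}_{p}(*))$ is purely algebraic --- its homology \emph{is} the $E_{2}$-page, and nothing in that construction produces a map to, or a filtration of, $H^{*}(X,\mathbb{F}_{p}(j))$. The identification of the abutment with the cohomology of the fibre is the entire content of the theorem, and it is exactly where the geometry enters in Krishna's argument: one first treats a split maximal torus $T$, where the Koszul differentials are realized by the Gysin/localization sequences of the tautological line bundles on the approximations to $BT$, and then descends from $T$ to $G$ using the Bruhat cell structure of $G/B$ (this is also where the non-torsion hypothesis is genuinely used, not merely to make $CH^{*}(BG)/p$ polynomial). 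Your closing paragraph flags these two identifications as ``the main obstacle'' and then defers them to \cite{Kri}; that is an honest acknowledgement, but it means the proposal is a framing of the theorem rather than a proof of it. Since the paper itself also defers entirely to \cite{Kri}, this is not a defect relative to the paper --- but if you intend your sketch to stand as an argument, the convergence-to-the-fibre step must be supplied rather than asserted.
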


\indent We'll need to know the motivic cohomology of $Sp_{2n}$ and $SO_{2n+1}$ along with the action of the Steenrod operations. For an affine group scheme $G/k,$ we let $G_{(1)}$ denote the first Frobenius kernel of $G.$
\begin{proposition}
\begin{enumerate}
\item The inclusion map $Sp_{2n} \to GL_{2n}$ induces an inclusion $$\bigwedge(c_{1,2}(\alpha), c_{1,4}(\alpha), \cdots, c_{1,2n}(\alpha)) \hookrightarrow H^{*}(Sp_{2n}, \mathbb{F}_{p}(*)) .$$ 
\item Assume that $p>2.$ The inclusion map $SO_{2n+1} \to GL_{2n+1}$ induces an inclusion $$\bigwedge(c_{1,2}(\alpha), c_{1,4}(\alpha), \cdots, c_{1,2n}(\alpha)) \hookrightarrow H^{*}(SO_{2n+1}, \mathbb{F}_{p}(*)) .$$ 
\end{enumerate}
\end{proposition}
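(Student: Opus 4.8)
The plan is to feed the Eilenberg--Moore spectral sequence of the preceding theorem with $G=Sp_{2n}$ (respectively $G=SO_{2n+1}$) acting on $X=G$ by left translation, and then to pin down its generators by naturality along the inclusion $G \hookrightarrow GL_{m}$, where $m=2n$ or $2n+1$. Two inputs are needed. First, since $p$ is a non-torsion prime for $Sp_{2n}$ and the standard representation is symplectically self-dual, the odd Chern classes vanish and $CH^{*}(BSp_{2n})/p \cong \mathbb{F}_{p}[c_{2},c_{4},\ldots,c_{2n}]$, a polynomial ring with $c_{2i}$ in bidegree $(4i,2i)$; for $SO_{2n+1}$ with $p>2$ the prime is non-torsion and the odd Chern classes are $2$-torsion, so again $CH^{*}(BSO_{2n+1})/p \cong \mathbb{F}_{p}[c_{2},c_{4},\ldots,c_{2n}]$, and in both cases the restriction $CH^{*}(BGL_{m})/p \to CH^{*}(BG)/p$ sends $c_{2i} \mapsto c_{2i}$. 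Second, because $G$ acts freely and transitively on $X=G$, the quotient $(G \times U)/G$ approximating $EG$ is contractible, whence $H^{*}_{G}(G,\mathbb{F}_{p}(*)) \cong H^{*}(k,\mathbb{F}_{p}(*))$ is the augmentation module over $CH^{*}(BG)/p$.

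With both arguments equal to the augmentation module, the spectral sequence takes the form $$E_{2} = \textup{Tor}^{\mathbb{F}_{p}[c_{2},\ldots,c_{2n}]}(\mathbb{F}_{p},\mathbb{F}_{p}) \cong \bigwedge(\tau_{2},\tau_{4},\ldots,\tau_{2n}),$$ the Koszul homology of a polynomial ring, where $\tau_{2i}$ lies in homological degree $-l=1$ and carries the internal degree of $c_{2i}$, hence contributes total degree $4i-1$ and weight $2i$ --- exactly the bidegree of $c_{1,2i}(\alpha)$. Regarding this as a spectral sequence of algebras, each generator $\tau_{2i}$ sits in the column $l=-1$, so any differential $d_{r}$ with $r \geq 2$ would carry it into a column $l \geq 1$, which vanishes in a second-quadrant spectral sequence; thus $d_{r}\tau_{2i}=0$, and since $E_{2}$ is generated as an algebra by the $\tau_{2i}$ the Leibniz rule forces the spectral sequence to collapse at $E_{2}$.

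It remains to identify the surviving generators with the restrictions of the Chern classes. Running the same construction for $GL_{m}$ recovers $H^{*}(GL_{m},\mathbb{F}_{p}(*)) \cong H^{*}(k,\mathbb{F}_{p}(*)) \otimes \bigwedge(c_{1,1}(\alpha),\ldots,c_{1,m}(\alpha))$, under which $c_{1,j}(\alpha)$ corresponds to the Koszul generator associated to $c_{j}$. Naturality of the spectral sequence along $G \hookrightarrow GL_{m}$, together with $c_{2i} \mapsto c_{2i}$ on classifying spaces, shows that the induced map of $E_{2}$-pages sends the generator associated to $c_{2i}$ to $\tau_{2i}$. Consequently the restriction of $c_{1,2i}(\alpha)$ to $G$ is the class detected by $\tau_{2i}$ in filtration $-1$, and the induced homomorphism $$\bigwedge(c_{1,2}(\alpha),\ldots,c_{1,2n}(\alpha)) \longrightarrow H^{*}(G,\mathbb{F}_{p}(*))$$ is injective, as claimed.

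The main obstacle I anticipate is precisely this last identification: one must verify that the edge homomorphism of the Eilenberg--Moore spectral sequence carries the geometrically defined restriction of $c_{1,2i}(\alpha)$ onto the Koszul generator $\tau_{2i}$ associated to $c_{2i}$, rather than merely matching bidegrees. The comparison with $GL_{m}$, whose cohomology is already known by the earlier proposition, is what supplies this; by contrast the Chow-ring inputs and the collapse of the spectral sequence are comparatively routine.
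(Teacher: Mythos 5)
Your proposal is a genuinely different route from the paper's. The paper does not use the Eilenberg--Moore spectral sequence for this proposition at all: it passes to the first Frobenius kernel, invokes the isomorphism $H^{*}(BG_{(1)}, \mathbb{F}_{p}(*)) \cong H^{*}(G, \mathbb{F}_{p}(*)) \otimes CH^{*}(BG)/p$ from the author's earlier work, and pins down the generators $g_{i}$ by the fact that the integral Bockstein sends $g_{i}$ to $p^{i-1}c_{i} \neq 0$; naturality of the Bockstein and of the Chern classes under $Sp_{2n} \hookrightarrow GL_{2n}$ then forces the restrictions of $g_{2}, g_{4}, \ldots, g_{2n}$ to generate an exterior algebra inside $H^{*}(Sp_{2n}, \mathbb{F}_{p}(*))$. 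Your Chow-ring inputs ($CH^{*}(BSp_{2n})/p$ and $CH^{*}(BSO_{2n+1})/p$ polynomial on the even Chern classes, with $c_{2i} \mapsto c_{2i}$ under restriction), the identification of $H^{*}_{G}(G)$ with the augmentation module, the Koszul computation of $\mathrm{Tor}$, the degree bookkeeping, and the multiplicative collapse from column $l=-1$ are all correct and consistent with how the paper itself runs the spectral sequence later, in the proof of the proposition on $GL_{n}/GL_{r}$.

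The one step that is asserted rather than supplied is exactly the one you flagged: naturality of Krishna's spectral sequence along the \emph{change of group} $Sp_{2n} \hookrightarrow GL_{2n}$. The theorem as stated in the paper fixes $G$ and lets $X$ vary, and the only comparison the paper ever constructs is of that type --- a map of spectral sequences for a fixed group $GL_{n}$, built ``at the level of cycles, using flat pullback'' between two fibrations over the same base $BGL_{n}$. Your argument instead needs a map of spectral sequences covering the ring map $CH^{*}(BGL_{m})/p \to CH^{*}(BG)/p$, i.e.\ functoriality in the base of the fibration square. This is standard for the topological Eilenberg--Moore spectral sequence and is very plausibly available in Krishna's setting, but it is an additional input that must be checked or constructed; without it, the collapse only tells you that $H^{*}(G, \mathbb{F}_{p}(*))$ has the right size, not that the restrictions of the classes $c_{1,2i}(\alpha)$ are nonzero. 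The paper's Bockstein argument is precisely a device for getting that nonvanishing without any change-of-group comparison of spectral sequences. If you establish the naturality (or replace it with a Bockstein-style detection of the generators), your proof goes through.
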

\begin{proof}
 After base change, we can assume that the base field $k$ is perfect. From \cite[\S 15]{TotChow}, the inclusion $Sp_{2n} \to GL_{2n}$ induces an isomorphism $$CH^{*}(BSp_{2n}) \cong \mathbb{Z}[c_{2}, c_{4}, \ldots, c_{2n}].$$ From \cite[Corollary 6.2]{Pri}, there are isomorphisms $$H^{*}(BGL_{2n \, (1)}, \mathbb{F}_{p}(*)) \cong H^{*}(GL_{2n}, \mathbb{F}_{p}(*)) \otimes CH^{*}(BGL_{2n})/p$$ and $$H^{*}(BSp_{2n \, (1)}, \mathbb{F}_{p}(*)) \cong H^{*}(Sp_{2n}, \mathbb{F}_{p}(*)) \otimes CH^{*}(BSp_{2n})/p$$ which are induced by the fibration $$G \cong G/G_{(1)} \to BG_{(1)} \to BG$$ (for any group scheme $G/k$). Under these isomorphisms, there are generators $$g_{1}, \ldots,g_{2n} \in H^{*}(GL_{2n}, \mathbb{F}_{p}(*)) \subset H^{*}(BGL_{2n \, (1)}, \mathbb{F}_{p}(*))$$ such that $0 \neq \delta(g_{i})=p^{i-1}c_{i} \in H^{2i}(BGL_{2n \, (1)}, \mathbb{Z}(i))$ where $\delta$ is the integral Bockstein homomorphism. There is a similar description of $$H^{*}(Sp_{2n}, \mathbb{F}_{p}(*)) \subset H^{*}(BSp_{2n \, (1)}, \mathbb{F}_{p}(*)).$$ It follows that the images of the classes $g_{2}, g_{4}, \ldots, g_{2n}$ under the pullback map $$H^{*}(BGL_{2n \, (1)}, \mathbb{F}_{p}(*)) \to H^{*}(BSp_{2n \, (1)}, \mathbb{F}_{p}(*))$$ generate a copy of $H^{*}(Sp_{2n}, \mathbb{F}_{p}(*)).$ This proves the first part of the proposition.
 
\indent Proving the second part is similar, using that $$CH^{*}(BSO_{2n+1})/p \cong \mathbb{F}_{p}[c_{2}, c_{4}, \ldots, c_{2n}]$$ under the inclusion $SO_{2n+1} \hookrightarrow GL_{2n+1}$ \cite[\S 16]{TotChow}.
\end{proof}

Proposition \ref{actionsteenrod} now gives a description of the action of the Steenrod operations $P^{n}$ on $$\mathbb{F}_{p}\cdot c_{1,2}(\alpha) \oplus \mathbb{F}_{p}\cdot c_{1,4}(\alpha)\oplus \cdots \oplus \mathbb{F}_{p}\cdot c_{1,2n}(\alpha) \subset H^{*}(Sp_{2n}, \mathbb{F}_{p}(*)), H^{*}(SO_{2n+1}, \mathbb{F}_{p}(*)).$$

\indent Now we determine the motivic cohomology of the varieties $$GL_{n}/GL_{r}, Sp_{2n}/Sp_{2n-2}, SO_{2n+1}/SO_{2n-1}$$ along with the action of the Steenrod operations. For the sake of brevity, we omit the $H^{1}(k, \mathbb{F}_{p}(1))$ subgroup from   $$\bigoplus_{i=1}^{\infty}H^{2i-1}(X, \mathbb{F}_{p}(i))$$ for the varieties $X$ being considered.
\begin{proposition} \label{propcohhomospace}
Let $n, r \in \mathbb{N}.$
\begin{enumerate}
\item The quotient map $GL_{n} \to GL_{n}/GL_{r}$ induces an inclusion $$\bigoplus_{i=1}^{\infty}H^{2i-1}(GL_{n}/GL_{r}, \mathbb{F}_{p}(i)) = \mathbb{F}_{p}\cdot c_{1,r+1}(\alpha) \oplus \cdots \oplus \mathbb{F}_{p}\cdot c_{1,n}(\alpha) \subset H^{*}(GL_{n}, \mathbb{F}_{p}(*)).$$

\item The quotient map $Sp_{2n} \to Sp_{2n}/Sp_{2n-2}$ induces an inclusion $$\bigoplus_{i=1}^{\infty}H^{2i-1}(Sp_{2n}/Sp_{2n-2}, \mathbb{F}_{p}(i)) = \mathbb{F}_{p}\cdot c_{1,2n}(\alpha)$$
$$  \subset \bigoplus_{i=1}^{\infty}H^{2i-1}(Sp_{2n}, \mathbb{F}_{p}(i))=\mathbb{F}_{p}\cdot c_{1,2}(\alpha) \oplus \mathbb{F}_{p}\cdot c_{1,4}(\alpha)\oplus \cdots \oplus \mathbb{F}_{p}\cdot c_{1,2n}(\alpha).$$

\item Assume that $p>2.$ The quotient map $SO_{2n+1} \to SO_{2n+1}/SO_{2n-1}$ induces an inclusion $$\bigoplus_{i=1}^{\infty}H^{2i-1}(SO_{2n+1}/SO_{2n-1}, \mathbb{F}_{p}(i)) = \mathbb{F}_{p}\cdot c_{1,2n}(\alpha)$$
$$  \subset \bigoplus_{i=1}^{\infty}H^{2i-1}(SO_{2n+1}, \mathbb{F}_{p}(i))=\mathbb{F}_{p}\cdot c_{1,2}(\alpha) \oplus \mathbb{F}_{p}\cdot c_{1,4}(\alpha)\oplus \cdots \oplus \mathbb{F}_{p}\cdot c_{1,2n}(\alpha).$$
\end{enumerate}
\end{proposition}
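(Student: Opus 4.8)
The plan is to treat all three cases uniformly by applying the Eilenberg--Moore spectral sequence of the preceding theorem to the fibration $G/H \to BH \to BG$, where $(G,H)$ runs over $(GL_{n},GL_{r})$, $(Sp_{2n},Sp_{2n-2})$, and $(SO_{2n+1},SO_{2n-1})$. First I would record that the Borel construction identifies the equivariant cohomology $H^{*}_{G}(G/H,\mathbb{F}_{p}(*))$ with $H^{*}(BH,\mathbb{F}_{p}(*))$, so that the spectral sequence reads
$$E_{2}=\textup{Tor}^{CH^{*}(BG)/p}(\mathbb{F}_{p}, H^{*}(BH,\mathbb{F}_{p}(*))) \Rightarrow H^{*}(G/H,\mathbb{F}_{p}(*)),$$
the module structure being given by the restriction $CH^{*}(BG)/p \to CH^{*}(BH)/p$ induced by $H \hookrightarrow G$. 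Using Totaro's computations already invoked in the preceding proof, this restriction is in each case the quotient of a polynomial ring killing exactly the top generators: $\mathbb{F}_{p}[c_{1},\ldots,c_{n}] \to \mathbb{F}_{p}[c_{1},\ldots,c_{r}]$ killing $c_{r+1},\ldots,c_{n}$ for $GL_{n}/GL_{r}$, and $\mathbb{F}_{p}[c_{2},c_{4},\ldots,c_{2n}] \to \mathbb{F}_{p}[c_{2},\ldots,c_{2n-2}]$ killing $c_{2n}$ in the symplectic and (for $p>2$) orthogonal cases.

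Next I would compute $E_{2}$ via the Koszul resolution of $\mathbb{F}_{p}$ over the polynomial ring $CH^{*}(BG)/p$. Since the retained generators form a regular sequence on $H^{*}(BH,\mathbb{F}_{p}(*))$ while the remaining generators act as zero, the Koszul homology collapses to the exterior algebra on the killed generators, tensored with $H^{*}(BH)/(\text{retained generators})=H^{*}(k,\mathbb{F}_{p}(*))$. Concretely this gives $\bigwedge(e_{r+1},\ldots,e_{n})$ in the first case and $\bigwedge(e_{2n})$ in the other two, where $e_{i}$ sits in Tor-degree $1$ and carries the internal bidegree $(2i,i)$ of $c_{1,i}(\alpha)$. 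Because the spectral sequence lies in the second quadrant and is multiplicative, and because this exterior algebra is generated in Tor-degree $1$ (column $l=-1$), each differential $d_{m}$ with $m\geq 2$ would send a generator into column $l=-1+m\geq 1$, which vanishes; by the derivation property all differentials vanish and the sequence degenerates at $E_{2}$. A key point is that the weight grading then isolates the stated answer: a product $e_{i_{1}}\cdots e_{i_{m}}$ lives in bidegree $(2\sum i_{j}-m,\sum i_{j})$, so it meets the diagonal slot $H^{2i-1}(G/H,\mathbb{F}_{p}(i))$ only when $m=1$. Hence, after suppressing the $H^{*}(k,\mathbb{F}_{p}(*))$-factor (in particular the $H^{1}(k,\mathbb{F}_{p}(1))$ summand named in the statement), the groups $\bigoplus_{i}H^{2i-1}(G/H,\mathbb{F}_{p}(i))$ are spanned exactly by the single generators $e_{i}$.

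To identify these generators with the Chern classes $c_{1,i}(\alpha)$ and to prove that the quotient map induces an inclusion, I would compare the above spectral sequence with the one computing $H^{*}(G,\mathbb{F}_{p}(*))$ itself. The inclusion $\{e\}\hookrightarrow H$ realizes $G \to G/H$ as the induced map on fibers of $EG \to BH$ over $BG$, hence induces a map of Eilenberg--Moore spectral sequences covering the augmentation $H^{*}(BH)\to \mathbb{F}_{p}$. On $E_{2}$ this is the natural map $\textup{Tor}^{CH^{*}(BG)/p}(\mathbb{F}_{p},H^{*}(BH)) \to \textup{Tor}^{CH^{*}(BG)/p}(\mathbb{F}_{p},\mathbb{F}_{p}) = H^{*}(G,\mathbb{F}_{p}(*))$, under which the Koszul generator $e_{i}$ (which depends only on $c_{i}$ in the base ring) maps to the corresponding Tor-degree-$1$ generator of $H^{*}(G)$, namely $c_{1,i}(\alpha)$ by Pushin's description. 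Since the $c_{1,i}(\alpha)$ are linearly independent in $H^{*}(G,\mathbb{F}_{p}(*))$, this shows that the pullback along $G\to G/H$ is injective on the diagonal odd part and carries each generator to $c_{1,i}(\alpha)$, yielding the asserted equalities and inclusions.

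The main obstacle I anticipate is this last identification: matching the \emph{abstract} Koszul generators of the $\textup{Tor}$-term with the \emph{geometric} higher Chern classes $c_{1,i}(\alpha)$ defined via $K$-theory and the maps to motivic Eilenberg--MacLane spaces, together with the trigraded bookkeeping (cohomological degree, weight, and Tor-degree) needed to verify that the comparison map respects these gradings and that the edge homomorphism agrees with the pullback along $G\to G/H$. The Koszul computation and the collapse are routine once the module structure and the multiplicative second-quadrant formalism are in place; the substantive work lies in the naturality argument that pins down the generators and yields injectivity.
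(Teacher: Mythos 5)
Your proposal is correct and follows essentially the same route as the paper: the Eilenberg--Moore spectral sequence for $G/H \to BH \to BG$ with $H^{*}_{G}(G/H)\cong H^{*}(BH)$, the Koszul resolution over $CH^{*}(BG)/p$ collapsing to an exterior algebra on the killed generators, the weight/degree bookkeeping isolating the Tor-degree-one classes on the diagonal, and the comparison with the spectral sequence converging to $H^{*}(G,\mathbb{F}_{p}(*))$ (induced by the augmentation $H^{*}(BH)\to H^{*}(k)$, realized in the paper by flat pullback at the level of cycles) to identify the generators with Pushin's classes $c_{1,i}(\alpha)$ and obtain injectivity. Your explicit degeneration and edge-homomorphism discussion only fleshes out steps the paper leaves implicit.
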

\begin{proof}
There is a map of fibrations 
\begin{equation} \label{fibrations}
\begin{tikzcd}
GL_{n} \arrow[r, ""] \arrow[d, ""]& GL_{n}/GL_{r} \arrow[d, ""] \\
* \arrow[r, ""] \arrow[d, ""] & BGL_{r} \arrow[d, ""] \\
BGL_{n} \arrow[r, "id."] & BGL_{n}
\end{tikzcd}
\end{equation}
where the horizontal maps are all flat. (Here, $*$ is a really a certain $GL_{n}$-variety $U$ such that $U/GL_{n}$ approximates $BGL_{n}.$) Let $$E_{2}^{l,q}=\textup{Tor}^{CH^{*}(BGL_{n})/p}_{-l,q,j}(\mathbb{F}_{p}, H^{*}(k, \mathbb{F}_{p}(*))) \Rightarrow H^{l+q}(GL_{n}, \mathbb{F}_{p}(j))$$ and $$F_{2}^{l,q}=\textup{Tor}^{CH^{*}(BGL_{n})/p}_{-l,q,j}(\mathbb{F}_{p}, H^{*}(BGL_{r}, \mathbb{F}_{p}(*))) \Rightarrow H^{l+q}(GL_{n}/GL_{r}, \mathbb{F}_{p}(j))$$ denote the Eilenberg-Moore spectral sequences corresponding to the fibrations in \ref{fibrations}. There is a map of spectral sequences $f: F \to E$ that is compatible with pullback $$H^{*}(GL_{n}/GL_{r}, \mathbb{F}_{p}(*)) \to H^{*}(GL_{n}, \mathbb{F}_{p}(*)) .$$ The map $f$ can be defined at the level of cycles, using flat pullback.

\indent We use the Koszul resolution \cite[Corollary 4.5.5]{Wei} to compute the Tor groups in these spectral sequences. For the spectral sequence $E,$ the Tor groups are the homology of the complex $$H^{*}(k, \mathbb{F}_{p}(*)) \otimes \bigwedge(c_{1}, \ldots, c_{n})$$ where the differential $d$ is $0$ on all terms $dc_{i_{1}} \wedge \cdots \wedge dc_{i_{m}}.$ In particular, $$\bigoplus_{i=1}^{\infty}H^{2i-1}(GL_{n}, \mathbb{F}_{p}(i)) \cong \mathbb{F}_{p}\cdot dc_{1} \oplus \mathbb{F}_{p}\cdot dc_{2} \oplus \cdots \oplus \mathbb{F}_{p}\cdot dc_{n}.$$ 

\indent Similarly, for the spectral sequence $F,$ the Tor groups are the homology of the complex $$\mathbb{F}_{p}[c_{1}, \ldots, c_{n}]/(c_{r+1}, \ldots , c_{n}) \otimes H^{*}(k, \mathbb{F}_{p}(*)) \otimes \bigwedge(c_{1}, \ldots, c_{n}).$$ This implies that $$\bigoplus_{i=1}^{\infty}H^{2i-1}(GL_{n}/GL_{r}, \mathbb{F}_{p}(i)) \cong \mathbb{F}_{p}\cdot dc_{r+1} \oplus \cdots \oplus \mathbb{F}_{p}\cdot dc_{n}$$ and proves the first part of the proposition.

\indent The proofs of the second and third part of the proposition are similar, using that $$H^{*}(BSp_{2n},\mathbb{F}_{p}(*)), H^{*}(BSO_{2n+1},\mathbb{F}_{p}(*)) = H^{*}(k, \mathbb{F}_{p}(*))[c_{2}, c_{4}, \ldots, c_{2n}].$$
\end{proof}

\begin{example} Assume that $\textup{char}(k)=3.$ Consider the fibration \begin{equation} \label{fibrationSp_4}Sp_{4} \to Sp_{4}/Sp_{2}. \end{equation} From Proposition \ref{propcohhomospace}, $$\bigoplus_{i=1}^{\infty}H^{2i-1}(Sp_{4}, \mathbb{F}_{3}(i))=\mathbb{F}_{3}\cdot c_{1,2}(\alpha) \oplus \mathbb{F}_{3}\cdot c_{1,4}(\alpha)$$ and $$\bigoplus_{i=1}^{\infty}H^{2i-1}(Sp_{4}/Sp_{2}, \mathbb{F}_{3}(i))=\mathbb{F}_{3}\cdot c_{1,4}(\alpha).$$ From Proposition \ref{actionsteenrod}, $P^{1}(c_{1,2}(\alpha))=c_{1,4}(\alpha).$ This implies that the quotient map \ref{fibrationSp_4} doesn't admit a section.
\end{example}

\end{document}